  \title{The Kac-Moody central extension on a loop group in the seven-term exact sequence}
  \author{TAKUYA FUJITANI}
  \date{\today}
  \theoremstyle{plain}
  \newtheorem{axm}{公理}[section]
  \newtheorem{thm}[axm]{Theorem}
  \newtheorem{prp}[axm]{Proposition}
  \newtheorem*{clm*}{主張}
  \theoremstyle{definition}
  \newtheorem{dfn}[axm]{Definition}
  \theoremstyle{remark}
  \newtheorem*{note*}{記号}
  \newtheorem*{rmk*}{注釈}
  \newtheorem*{exm*}{例}
  \newtheorem{exm}[axm]{Example}
  \newtheorem{rmk}[axm]{Remark}
  \def\Z{\mathbb{Z}}
  \def\R{\mathbb{R}}
  \def\sgn{\mathrm{sgn}}
  \def\Hom{\mathrm{Hom}}
  \newcommand{\frk}[1]{\mbox{$\mathfrak{#1}$}}
  \newcommand{\Rm}[1]{\mbox{$\mathrm{#1}$}}
\begin{document}

\maketitle

\begin{abstract}
Mickelsson defined a group 2-cocycle on $C^{\infty}(D; G)$ and constructed a smooth central extension on a loop group $LG$, called the affine Kac-Moody central extension. 
We reformulate this central extension theory in the terms of the seven-term exact sequence for cohomology groups for discrete groups. 
\end{abstract}

\tableofcontents


\section{Introduction}

Let $G$ be a compact, connected, simply-connected, simple Lie group. 
Denote by $LG$ a (free) loop group $C^{\infty}(S^{1}; G)$ with pointwise product. 
There exists a central $U(1)$-extension $P$ on the loop group $LG$, called the universal central extension on $LG$. 
The universality means that, for any central extension $0 \to A \to E \to LG \to 1$, there exists a homomorphism $f : P \to E$ satisfying the following commutative diagram: 
\[\xymatrix{
0 \ar[r] & U(1) \ar[r] \ar[d]^{f|_{U(1)}} & P \ar[r] \ar[d]^{f} & LG \ar[r] \ar@{}[d]|{=} & 1\\
0 \ar[r] & A \ar[r] & E \ar[r] & LG \ar[r] & 1. 
}\]

Mickelsson \cite{Mickelsson1987} constructed the universal central extension on $LG$. 
Denote by $G^{D}$ the group of smooth maps $C^{\infty}(D; G)$ form the closed unit 2-disk $D$ to the Lie group $G$ with pointwise product. 
Set $G^{D}_{S^{1}} = \{ g \in G^{D} \mid g|_{\partial D = S^{1}} \equiv 1 \in G\}$ a normal subgroup of $G^{D}$. 
Then we have a group short exact sequence $1 \to G^{D}_{S^{1}} \to G^{D} \to LG \to 1$. 
He introduced a group 2-cocycle $C : G^{D} \times G^{D} \to \R$ on $G^{D}$ and a group 1-cochain $\Lambda : G^{D}_{S^{1}} \to \R/\Z$ which satisfy that $C|_{G^{D}_{S^{1}}} = -\delta\Lambda$; 
\[
C(f, g) = \frac{1}{8\pi^{2}}\int_{D}\Rm{tr}(f^{-1}df \wedge dg g^{-1}), \quad \Lambda(h) = -\frac{1}{24\pi^{2}}\int_{B}\Rm{tr}(\bar{h}^{-1}d\bar{h})^{\wedge 3}, 
\]
and constructed the central $U(1)$-extension $P$ on $LG$ by a quotient space of the product space $G^{D} \times U(1)$ by the equivalence relation; 
\[
(g, \lambda) \sim (g h, \lambda + C(g, h) + \Lambda(h))
\]
for $g \in G^{D}$, $h \in G^{D}_{S^{1}}$ and $\lambda \in \R/\Z \cong U(1)$. 
A group structure on $P$ is given by $(g, \lambda)(h, \mu) = (g h, \lambda + \mu + C(g, h))$ for $g, h \in G^{D}$ and $\lambda, \mu \in U(1)$. 
This product on $P$ is well-defined since the following equation holds: 
\begin{equation}
\label{eq:0}
C(g^{-1}h, g) - C(g^{-1}, h) + \Lambda(g^{-1} h g) - \Lambda(h) \equiv 0 \bmod \Z. 
\end{equation}

The infinitesimal version of group 2-cocycle $C : G^{D} \times G^{D} \to \R$ defines a Lie algebra 2-cocycle $-c^{D} : \frk{g}^{D} \times \frk{g}^{D} \to \R$ on a current algebra $\frk{g}^{D} = C^{\infty}(D; \frk{g})$: 
\[
c^{D}(X, Y) = \frac{1}{4\pi^{2}}\int_{D}\Rm{tr}(dX \wedge dY) = \frac{1}{4\pi^{2}}\int_{\partial D = S^{1}}\Rm{tr}(X dY)
\]
for $X, Y \in \frk{g}^{D}$. 
This Lie algebra 2-cocycle defines a Lie algebra 2-cocycle $c : L\frk{g} \times L\frk{g} \to \R$ on a loop algebra $L\frk{g} = C^{\infty}(S^{1}; \frk{g})$, and a Lie algebra central extension on $L\frk{g}$ associated with this 2-cocycle is well-known as the {\it affine Lie algebra} or the {\it Kac-Moody algebra} on the Lie algebra $\frk{g}$. 
Therefore the Lie group $P$ is also called the {\it Kac-Moody group} on the Lie group $G$. 

With respect to a group short exact sequence $1 \to G^{D}_{S^{1}} \to G^{D} \to LG \to 1$, we have an exact sequence of group cohomology called the seven-term exact sequence: 
\begin{align*}
0 \to H^{1}(LG; U(1)) \to H^{1}(G^{D}; U(1)) \to H^{1}(G^{D}_{S^{1}}; U(1))^{G^{D}}\\
 \to H^{2}(LG; U(1)) \to \Rm{Ker}(H^{2}(G^{D}; U(1)) \to H^{2}(G^{D}_{S^{1}}; U(1)))\\
 \to H^{1}(LG; H^{1}(G^{D}_{S^{1}};U(1))) \to H^{3}(LG; U(1)). 
\end{align*} 
We can rephrase the Mickelsson's 2-cocycle $C$ with the group 1-cochain $\Lambda$ and the universal central extension $P$ on the loop group $LG$ in terms of the above seven-term exact sequence. 

\begin{thm}
The cohomology class associated with the universal central extension $P$ on $LG$ is in the fourth term, and it is sent to one of the Mickelsson's 2-cocycle $C$ in the fifth term. 
Furthermore, equation \ref{eq:0} implies that the class of $C$ in the fifth term vanishes in the sixth term. 
\end{thm}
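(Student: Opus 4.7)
The seven-term exact sequence in the statement is the low-degree part of the Lyndon--Hochschild--Serre spectral sequence for the normal subgroup $G^{D}_{S^{1}} \subset G^{D}$ with trivial $U(1)$-coefficients. In this framework its fourth term $H^{2}(LG; U(1))$ classifies central $U(1)$-extensions of the discrete group $LG$; the map from the fourth to the fifth term is inflation along the quotient $\pi : G^{D} \to LG$; and the map from the fifth to the sixth term $H^{1}(LG; H^{1}(G^{D}_{S^{1}}; U(1)))$ is the transgression. The proof consists of identifying each of these abstract maps with the explicit data $P$, $C$, and $\Lambda$ of the Mickelsson construction.

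For the first two claims I would argue as follows. The universal central extension $P$ tautologically represents a class $[P] \in H^{2}(LG; U(1))$, which is the fourth term. To compute the image of $[P]$ under inflation, I pick the set-theoretic section $s : LG \to P$ given by $s(\pi(g)) = [(g, 0)]$ for any lift $g \in G^{D}$. A direct computation using the Mickelsson product yields $s(\pi(g_{1}))\,s(\pi(g_{2}))\,s(\pi(g_{1}g_{2}))^{-1} = C(g_{1}, g_{2}) \in U(1)$, so the inflation $\pi^{*}[P]$ is represented by the cocycle $C$ itself. The hypothesis $C|_{G^{D}_{S^{1}}} = -\delta\Lambda$ then says that the restriction of $C$ to $G^{D}_{S^{1}}$ is a coboundary, which places $[C]$ in the kernel of the restriction map and hence in the fifth term.

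For the third claim I would unwind the transgression explicitly. Since $C|_{G^{D}_{S^{1}}} = -\delta\Lambda$, the pulled-back extension $G^{D} \times U(1)$ restricts over $G^{D}_{S^{1}}$ to a split central extension, with splitting homomorphism $\sigma(h) = (h, \Lambda(h))$. For each $g \in G^{D}$ and $h \in G^{D}_{S^{1}}$, the two elements $(g, 0)\,\sigma(h)\,(g, 0)^{-1}$ and $\sigma(ghg^{-1})$ both lift $ghg^{-1} \in G^{D}_{S^{1}}$, so their $U(1)$-difference is a well-defined cochain $\Phi_{g} : G^{D}_{S^{1}} \to U(1)$. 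Because conjugation is a group automorphism and $\sigma$ is a homomorphism, $\Phi_{g}$ is automatically a 1-cocycle on $G^{D}_{S^{1}}$; because $\sigma$ is a homomorphism, $\Phi_{h'}$ is trivial for $h' \in G^{D}_{S^{1}}$; and $g \mapsto [\Phi_{g}]$ therefore descends to a 1-cocycle on $LG$ with values in $H^{1}(G^{D}_{S^{1}}; U(1))$ whose class is precisely the image of $[C]$ in the sixth term. Manipulating $\Phi_{g}(h)$ by a few applications of the 2-cocycle identity $\delta C = 0$ (together with the normalization $C(1, -) = 0$) and applying the substitution $g \mapsto g^{-1}$, one can rewrite $\Phi_{g}(h)$ as the left-hand side of equation (\ref{eq:0}). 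Since equation (\ref{eq:0}) asserts this expression vanishes in $\R/\Z$, the cochain $\Phi_{g}$ is identically zero and its class in the sixth term vanishes.

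The main obstacle is this final algebraic identification in the third paragraph: rewriting the explicit formula for $\Phi_{g}(h)$ in the form of the left-hand side of equation (\ref{eq:0}). It requires careful bookkeeping of sign conventions for $\delta$ and $\sigma$ and repeated use of the 2-cocycle identity for $C$ to rearrange the arguments into the form $C(g^{-1}h, g) - C(g^{-1}, h)$. By contrast, the first two claims follow essentially by rewriting definitions, and the fact that equation (\ref{eq:0}) holds strictly in $\R/\Z$ (not merely up to coboundary) immediately yields the vanishing of the class in the sixth term.
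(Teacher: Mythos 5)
Your overall strategy matches the paper's: membership in the fifth term follows from $C|_{G^{D}_{S^{1}}} = -\delta\Lambda$ (Proposition \ref{prp:3.2}), the fourth-to-fifth map is inflation and sends $[P]$ to $[C]$ because the pullback of $P$ along $G^{D} \to LG$ is exactly the extension $G^{D} \times_{C} U(1)$, and the vanishing in the sixth term comes from equation (\ref{eq:0}). One small phrasing issue: your section $s(\pi(g)) = [(g,0)]$ of $P \to LG$ is not well-defined until you fix a set-theoretic lift $LG \to G^{D}$, since $[(g,0)] \neq [(gh,0)]$ for $h \in G^{D}_{S^{1}}$ in general; the clean statement, which is what the paper uses, is that the pulled-back extension $\pi^{*}P$ over $G^{D}$ carries the tautological section $g \mapsto (g,[(g,0)])$ whose associated cocycle is $C$.

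The genuine divergence, and the one real gap, is in your treatment of the fifth-to-sixth map. The paper does not re-derive this map: it quotes from Dekimpe--Hartl--Wauters the explicit formula
\[
\rho(C,\Lambda)([g])(n) = \Lambda(g^{-1}ng) - \Lambda(n) + C(g^{-1}n,g) - C(g^{-1},n),
\]
which is \emph{literally} the left-hand side of equation (\ref{eq:0}), so the vanishing is immediate. You instead reconstruct $\rho$ from the conjugation-difference description (comparing $(g,0)\,\sigma(h)\,(g,0)^{-1}$ with $\sigma(ghg^{-1})$ for the splitting $\sigma(h)=(h,\Lambda(h))$), and then assert that ``a few applications of the 2-cocycle identity'' turn your $\Phi_{g}(h)$ into the left-hand side of (\ref{eq:0}). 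That is precisely the step you flag as the main obstacle, and it is not routine: a direct expansion of your $\Phi_{g^{-1}}(h)$ gives $\Lambda(h) - \Lambda(g^{-1}hg) + C(g^{-1},h) + C(g^{-1}h,g) - C(g^{-1},g)$, which differs from $\pm$ the left-hand side of (\ref{eq:0}) by terms such as $2C(g^{-1}h,g) - C(g^{-1},g)$ that do not cancel by the cocycle identity alone; reconciling the two requires tracking the precise sign and ordering conventions of the reference (and, at the level of cohomology classes, possibly a coboundary correction). So your route is more self-contained in spirit but leaves unproven exactly the identification that the paper outsources to \cite{DHW2012}. Either carry out that identification in full, or do as the paper does and invoke the stated formula for $\rho$, after which the third claim is a one-line consequence of equation (\ref{eq:0}).
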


This paper is organized as follows. 
In section 2, we recall the definition and some properties of the cohomology theory for groups. 
It is known that the low-dimensional cohomology group for groups has the another definition. 
There is a bijection between the second cohomology group for groups and group extensions, in particularly, central extensions. 
In section 3, we introduce exact sequences with respect to a group short exact sequence. 
For the group short exact sequence, there is a spectral sequence of cohomology groups for groups, called the Lyndon-Hochschild-Serre (LHS) spectral sequence. 
We obtain an exact sequence of low-dimensional cohomology groups for groups, called the five-term exact sequence, and we can improve this sequence to the seven-term exact sequence. 
In section 4, we briefly review a construction of the universal central extension on a free loop group due to Mickelsson. 
In section 5, we show what the Mickelsson's construction means in terms of the seven-term exact sequence for a group short exact sequence. 

\section*{Acknowledgement}
I would like to express my deepest gratitude to Prof. Moriyoshi who provided carefully considered feedback and valuable comments.


\section{The group cohomology and the group extensions}

In this section, we recall the definition and some properties of cohomology theory for groups. 

Let $G$ be a group and $A$ be an Abelian group. 
Assume that $A$ has a right $G$-action that is written by $a.g \in A$ for $a \in A$ and $g \in G$. 

\begin{dfn}
The {\it group cochain complex} $(C^{\ast}(G; A), \delta)$ with coefficients in $A$ is given by the pair 
\[
C^{p}(G; A) = \{c : G^{p} \to A\}, \quad \delta : C^{p}(G; A) \to C^{p+1}(G; A), 
\]
where a {\it group} $p$-{\it cochain} $c : G^{p} \to A$ is an arbitrary function on the $p$-tuple product of $G$ and where the coboundary $\delta$ is defined to be 
\begin{align*}
\delta c(g_{1}, \ldots, g_{p+1}) = c(g_{2}, \ldots, g_{p+1}) & + \sum_{i=1}^{p}(-1)^{i}c(g_{1}, \ldots, g_{i} g_{i+1}, \ldots, g_{p+1})\\
& + (-1)^{p+1} c(g_{1}, \ldots, g_{p}).g_{p+1}. 
\end{align*}
Denote by $H^{\ast}(G; A)$ the cohomology group of $C^{\ast}(G; A)$, called the {\it group cohomology} of $G$ with coefficients in $A$. 
\end{dfn}

Denote by $C^{p}(G; A)_{N}$ the group of the {\it normalized} group $p$-cochains $c : G^{p} \to A$ which are group $p$-cochains such that $c(g_{1}, \ldots, g_{p}) = 0$ whenever one of the $g_{i}$ is equal to $1$. 
As can be seen easily, if $c$ is a normalized cochain then $\delta c$ is so. 
Then $(C^{p}(G; A)_{N}, \delta)$ is a subcomplex of $C^{\ast}(G; A)$.  
As is well known, the cohomology of the normalized group cochain complex $C^{\ast}(G; A)_{N}$ coincides the one of the ordinary group cochain complex $C^{\ast}(G; A)$. 

\begin{exm}
Assume that $G$ acts on $A$ trivially. 
Then every group 1-cocyle $f  \in C^{1}(G; A)$ satisfies that 
\[
\delta f(g, h) = f(h) - f(g h) + f(g) = 0
\]
for $g, h \in G$, so that, $f$ is a homomorphism. 
On the other hand, for any 0-cochain $c \in C^{0}(G; A)$, its coboundary is $\delta c(g) = c.g - c = 0$ for $g \in G$. 
This implies that all of the group 1-coboundaries are trivial. 
Then the first cohomology group $H^{1}(G; A)$ is just the group of homomorphisms $\Hom(G, A)$. 
\end{exm}

As with the first cohomology, there is an alternative definition for the second cohomology groups. 
We recall group extensions and central extensions. 

\begin{dfn}
Let $A$, $\Gamma$ and $G$ be arbitrary groups. 
A {\it group extension} $\Gamma$ of $G$ by $A$ is a short exact sequence of groups: 
\[
1 \to A \to \Gamma \to G \to 1. 
\]
Furthermore, $\Gamma$ is a {\it central extension} if $A \to \Gamma$ factors through the center of $\Gamma$. 
\end{dfn}

Consider the group extension $\Gamma$ of $G$ by an Abelian group $A$: 
\[
0 \to A \hookrightarrow\ \Gamma \to G \to 1
\]
Set $s : G \to \Gamma$ an arbitrary set-theorical section. 
There is a group action of $G$ on $A$ defined by 
\[
a.g = s(g)^{-1} a s(g)
\]
for $g \in G$ and $a \in A$ since $p(a.g) = g^{-1} p(a) g = 1$, that is, $a.g \in A$. 
In fact, this group action is independent of the choice of section $s$. 

For $g, h \in G$, we have $s(g) s(h) s(g h)^{-1} \in A$ because of $p(s(g) s(h) s(g h)^{-1}) = 1$. 
Then we can define a group 2-cochain $\chi \in C^{2}(G; A)$ as the following: 
\[
\chi(g, h) = s(g) s(h) s(g h)^{-1} \in A. 
\]
It is easy to check that $\chi \in C^{2}(G; A)$ is a group 2-cocycle and that its cohomology class $[\chi] \in H^{2}(G; A)$ is independent on the choice of section $s : G \to \Gamma$, so that, the class depends only on the group extension. 

\begin{dfn}
The {\it extension class} of the group extension $\Gamma$, denote by $e(\Gamma)$, is the cohomology class $[\chi] \in H^{2}(G; A)$. 
\end{dfn}

If a section $s : G \to \Gamma$ is a homomorphism then the corresponding 2-cocycle $\chi \in C^{2}(G; A)$ is obviously trivial. 
This implies that the splitting extension has the trivial extension class. 
The following is well known. 

\begin{prp}[see Brown \cite{Brown1982}]
Let $G$ be a group and $A$ be a $G$-module. 
The second cohomology group $H^{2}(G; A)$ is to the equivalence classes of central extensions of $G$ by $A$; 
\[
H^{2}(G; A) \cong \{\mbox{central extensions of $G$ by $A$}\}/\{\mbox{splitting extensions}\}. \qed
\]
\end{prp}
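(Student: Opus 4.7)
The plan is to construct a two-sided inverse to the map $\Gamma \mapsto e(\Gamma)$ already defined above. First I would verify that $e(\Gamma)$ depends only on the equivalence class of the extension. Any other set-theoretic section $s' : G \to \Gamma$ can be written $s'(g) = \alpha(g) \cdot s(g)$ for a unique function $\alpha : G \to A$, and a direct manipulation inside $\Gamma$ shows that the associated cocycle $\chi'$ satisfies $\chi' - \chi = \delta\alpha$; essentially the same computation, transported through an equivalence $\Gamma \to \Gamma'$, handles isomorphic extensions. Consequently $e$ descends to a well-defined map from equivalence classes of extensions to $H^{2}(G; A)$.

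For the inverse map, given a normalized 2-cocycle $\chi \in C^{2}(G; A)_{N}$ I would take $\Gamma_{\chi}$ to be the set $A \times G$ with product
\[
(a, g)(b, h) = (a.h + b + \chi(g, h),\ gh).
\]
Associativity of this product is exactly the 2-cocycle relation $\delta\chi = 0$ written out in the paper's conventions; the normalization $\chi(1, g) = \chi(g, 1) = 0$ makes $(0, 1)$ a two-sided identity, and each element has an explicit inverse. The map $a \mapsto (a, 1)$ is an injective homomorphism $A \hookrightarrow \Gamma_{\chi}$ and $(a, g) \mapsto g$ is a surjection $\Gamma_{\chi} \to G$ whose kernel is the image of $A$; in the central case (trivial $G$-action on $A$) this image lies in the center, so $\Gamma_{\chi}$ is a genuine central extension, and in general the induced $G$-module structure on $A$ is the given one.

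Next I would check that the two constructions are mutually inverse. The canonical section $g \mapsto (0, g)$ of $\Gamma_{\chi}$ recovers $\chi$ as its defining 2-cocycle, so $e(\Gamma_{\chi}) = [\chi]$. Conversely, given an extension $\Gamma$ with section $s$ and cocycle $\chi$, the assignment $(a, g) \mapsto a \cdot s(g)$ is a bijection $\Gamma_{\chi} \to \Gamma$; the twisted product on $\Gamma_{\chi}$ was designed precisely so that this bijection is a group homomorphism, and it clearly commutes with the inclusions of $A$ and the projections to $G$, hence realises an equivalence of extensions. Finally, $e(\Gamma) = 0$ iff the extension splits: a homomorphic section automatically produces $\chi \equiv 0$, and conversely if $\chi = \delta\alpha$ for some 1-cochain $\alpha : G \to A$, then $s'(g) = \alpha(g)^{-1} s(g)$ is a homomorphic section.

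The main obstacle is purely bookkeeping. One must keep the coboundary sign convention, the right $G$-action $a.g$ on $A$, and the order in which $s(g)$ and $s(h)$ are multiplied inside $\Gamma$ all consistent, so that the identity $\delta\chi = 0$ genuinely encodes associativity of the product on $\Gamma_{\chi}$ and the section $s$ genuinely produces a cocycle rather than its inverse. Once these conventions are aligned, every remaining step in the proof is formal.
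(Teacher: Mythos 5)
The paper gives no proof of this proposition at all --- it is quoted from Brown with a \verb|\qed| --- and your argument is the standard crossed-product proof of exactly that classical fact, so there is nothing to compare against; the proof is correct. The one convention check that actually matters does go through: with the paper's coboundary $\delta\chi(g,h,k)=\chi(h,k)-\chi(gh,k)+\chi(g,hk)-\chi(g,h).k$, associativity of your product $(a,g)(b,h)=(a.h+b+\chi(g,h),gh)$ is literally the cocycle identity, and since the proposition concerns \emph{central} extensions the $G$-action is trivial, so the identification $(a,g)\mapsto a\,s(g)$ is a homomorphism and the two constructions are mutually inverse as you claim. (Only if you wanted the statement for nontrivial actions would you need to be careful that the paper's formula $\chi(g,h)=s(g)s(h)s(gh)^{-1}$, read against its right-action coboundary, naturally produces the product $(a+b.g^{-1}+\chi(g,h),gh)$ rather than yours --- but that is precisely the bookkeeping issue you already flagged, and it is vacuous in the central case.)
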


\begin{exm}[the universal covering of the circle diffeomorphisms.]
\label{exm:univ_cov}
Set $\Rm{Diff}^{+}(S^{1})$ the group of orientation-preserving diffeomorphisms on the circle $S^{1}$. 
Let $H$ be the universal covering group of $\Rm{Diff}^{+}(S^{1})$. 
Then we obtain a central extension $0 \to \Z \to H \xrightarrow{\pi} \Rm{Diff}^{+}(S^{1}) \to 1$. 
By the inclusion $\Z \hookrightarrow \R$, we extend this central extension to the central extension by $\R$: 
\[
0 \to \R \to H_{\R} \to \Rm{Diff}_{+}(S^{1}) \to 1. 
\]
As is well known, the extension class $e(H_{\R})$ is given by the formula 
\[
\chi(g_{1}, g_{2}) = \frac{1}{4\pi^{2}} \int_{0}^{2\pi}\left(h_{1} \circ h_{2}(x) - h_{1}(x) - h_{2}(x)\right) \, dx
\]
with $h_{1}, h_{2} \in H$ and $g_{1} = \pi(h_{1})$, $g_{2} = \pi(g_{2})$. 
\end{exm}

\begin{dfn}
Let $G$ be a group and $A$ be a $G$-module. 
For a $p$-cochain $c : G^{p} \to A$ and a $q$-cochain $c' : G^{q} \to A$, define a {\it cup product} $c \cup c' : G^{p+q} \to A \otimes A$ as 
\[
(c \cup c')(g_{1}, \ldots, g_{p+q}) = c(g_{1}, \ldots, g_{p}).g_{p+1} \cdots g_{p+q} \otimes c'(g_{p+1}, \ldots, g_{p+q}) \in A \otimes A. 
\]
\end{dfn}

It can be seen immediately that the cup product $c \cup c'$ is a cocycle if two cochains $c$ and $c'$ are cocycles, so that, we can define cup product on cohomology groups. 

\begin{prp}
\label{prp:1.8}
Let $G$ be a group, $V$ and $A$ be $G$-modules. 
Take a group 1-cocycle $\bar{\theta} : G \to V$ and a $G$-equivariant bilinear form $\omega : V \otimes V \to A$. 
Then a group 2-cochain $\omega(\bar{\theta} \cup \bar{\theta}) : G \times G \to A$ is a group 2-cocyle. 
\end{prp}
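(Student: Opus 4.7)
The plan is to invoke two observations made in the paragraphs just before the proposition. First, the cup product of cocycles is itself a cocycle: since $\bar{\theta} : G \to V$ is a 1-cocycle, $\bar{\theta} \cup \bar{\theta}$ is a 2-cocycle in $C^{2}(G; V \otimes V)$, where $G$ acts diagonally on $V \otimes V$. Second, a $G$-equivariant linear map between $G$-modules induces a chain map on group cochain complexes. Since $\omega : V \otimes V \to A$ is $G$-equivariant, it yields a chain map $\omega_{\ast} : C^{\ast}(G; V \otimes V) \to C^{\ast}(G; A)$, and $\omega_{\ast}(\bar{\theta} \cup \bar{\theta}) = \omega(\bar{\theta} \cup \bar{\theta})$ is therefore a 2-cocycle in $C^{2}(G; A)$.

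For a direct verification, I would write $C(g_{1}, g_{2}) = \omega(\bar{\theta}(g_{1}).g_{2}, \bar{\theta}(g_{2}))$ and expand $\delta C(g_{1}, g_{2}, g_{3}) = C(g_{2}, g_{3}) - C(g_{1} g_{2}, g_{3}) + C(g_{1}, g_{2} g_{3}) - C(g_{1}, g_{2}).g_{3}$. The 1-cocycle identity $\bar{\theta}(g h) = \bar{\theta}(g).h + \bar{\theta}(h)$ combined with the bilinearity of $\omega$ splits the two middle terms into a sum of two summands each, and the $G$-equivariance of $\omega$ rewrites the final term as $\omega(\bar{\theta}(g_{1}).g_{2} g_{3}, \bar{\theta}(g_{2}).g_{3})$. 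A short bookkeeping check then shows that the six resulting summands cancel in pairs.

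The principal obstacle is only notational: one must carefully distinguish the action of $g_{3}$ on $A$ appearing in the last term of $\delta C$ from the action of $g_{3}$ on $V$ appearing inside the cup product, and invoke $G$-equivariance of $\omega$ at precisely the right moment to identify the two. Once this is done, no nontrivial identity is needed; the proposition is a formal consequence of the functoriality of group cohomology in the coefficient module together with the compatibility of the cup product with the coboundary.
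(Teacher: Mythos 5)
Your proposal is correct and matches the paper's own proof, which likewise combines the two ingredients you cite: the $G$-equivariance of $\omega$ makes it commute with $\delta$ (a chain map on coefficients), and the Leibniz rule gives $\delta(\bar{\theta}\cup\bar{\theta}) = \delta\bar{\theta}\cup\bar{\theta} - \bar{\theta}\cup\delta\bar{\theta} = 0$. Your supplementary direct expansion of $\delta C$ is also accurate and cancels exactly as you describe.
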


\begin{proof}
According to $G$-equivariance of $\omega$, the bilinear map $\omega$ is compatible with the group coboundary $\delta$. 
Hence we obtain 
\[
\delta \omega(\bar{\theta} \cup \bar{\theta}) = \omega(\delta(\bar{\theta} \cup \bar{\theta})) = \omega(\delta\bar{\theta} \cup \bar{\theta} - \bar{\theta} \cup \delta\bar{\theta}) = 0. 
\]
\end{proof}

\begin{exm}
Recall the group of orientation-preserving diffeomorphisms $\Rm{Diff}_{+}(S^{1})$ on the circle $S^{1}$. 
Define a group 1-cocycle $\ell : \Rm{Diff}_{+}(S^{1}) \to C^{\infty}(S^{1}; \R)$ as $\ell(g) = \log (g')$, and a $\Rm{Diff}_{+}(S^{1})$-invariant bilinear form $\omega : C^{\infty}(S^{1}; \R) \otimes C^{\infty}(S^{1}; \R) \to \R/\Z$ as 
\[
\omega(f, g) = \frac{1}{2}\int_{S^{1}} f \, dg. \bmod \Z
\]
Then a group 2-cocycle $\omega(\ell \cup \ell) \in C^{2}(\Rm{Diff}_{+}(S^{1}); \R/\Z)$ is well-known as the {\it Bott-Virasoro cocycle} (see Khesin-Wendt \cite{KW2009}). 
\end{exm}


\section{The LHS spectral sequence and the five/seven-term exact sequence}
In this section, we introduce the Lyndon-Hochschild-Serre spectral sequence and the five- (or seven-)term exact sequence. 
We consider group cochain complexes as normalized ones through this section. 

Let $G$ be a group, $N$ be a normal subgroup of $G$, and $A$ be a $G$-module. 
Then there is a group extension:
\[
1 \to N \to G \to G/N \to 1. 
\]

We define a filtration $(C_{p}^{\ast})_{p\in\Z}$ of the complex $C^{\ast}(G; A)$ as follows: 
\[
\begin{cases}
C_{p}^{q} = C^{q}(G; A), & \mbox{if $p \leq 0$}, \\
C_{p}^{q} = 0, & \mbox{if $q < p$}, 
\end{cases}
\]
and for $0 < p \leq q$, we set $C_{p}^{q}$ the group of $q$-cochains $c \in C^{q}(G; A)$ such that $f(g_{1}, \ldots, g_{p}) = 0$ whenever $(n-j+1)$ of the arguments belong to $N$. 
We can check $\delta(C_{p}^{\ast}) \subset C_{p}^{\ast}$ easily. 

This filtration defines a spectral sequence $E_{r}^{p, q}$, called the {\it Lyndon-Hochschild-Serre spectral sequence} for the group extension: 
\[
1 \to N \to G \to G/N \to 1. 
\] 

\begin{dfn}[Hochschild-Serre \cite{Hochschild-Serre53}]
Let $G$ be a group, $N$ be a normal subgroup of $G$ and $A$ be a $G$-module. 
We set $Z_{r}^{p, q} = \{ c \in C_{p}^{p+q} \mid \delta c \in C_{p+r}^{p+q+1}\}$. 
The Lyndon-Hochschild-Serre (LHS) spectral sequence is a spectral sequence defined as
\[
E_{r}^{p, q} = Z_{r}^{p, q}/(Z_{r-1}^{p-1, q+1} + \delta Z_{r-1}^{p+1-r, q+r-2})
\]
and differential operators $d_{r} : E_{r}^{p, q} \to E_{r}^{p+r, q-r+1}$ are induced by $\delta$. 
\end{dfn}

\begin{prp}[Hochschild-Serre \cite{Hochschild-Serre53}]
The LHS spectral sequence $E_{r}^{p, q}$ satisfies the follows. 
\begin{enumerate}
\item 
There exists an isomorphism 
\[
E_{1}^{p, q} \cong C^{p}(G/N; H^{q}(N; A)). 
\]
and the differential $d_{1}$ corresponds to the group coboundary map $\delta_{G/N} : C^{p}(G/N; H^{q}(N; A)) \to C^{p+1}(G/N; H^{q}(N; A))$. 
\item 
There exists an isomorphism
\[
E_{2}^{p, q} \cong H^{p}(G/N; H^{q}(N; A)). 
\]
\item 
This spectral sequence $E_{r}^{p, q}$ converges to $H^{p+q}(G; A)$, that is, there is an isomorphism
\[
E_{\infty}^{p, q} \cong \Rm{Im}(H^{p+q}(C_{p}) \to H^{p+q}(G; A))/\Rm{Im}(H^{p+q}(C_{p+1}) \to H^{p+q}(G; A)). 
\]
\end{enumerate}
\end{prp}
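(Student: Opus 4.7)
The plan is to treat convergence (part 3) first as a standard consequence of the filtration being bounded below and exhaustive in each cohomological degree, then to devote the bulk of the proof to identifying the $E_{0}$ and $E_{1}$ pages in parts (1) and (2). Since $C_{p}^{q} = C^{q}(G;A)$ for $p \leq 0$ and $C_{p}^{q} = 0$ for $p > q$, the filtration in each degree $p+q$ is finite, and the general machinery of spectral sequences of filtered cochain complexes delivers the convergence formula directly.

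For part (1), the crucial step is the identification
\[
E_{0}^{p,q} \;=\; C_{p}^{p+q}/C_{p+1}^{p+q} \;\cong\; C^{p}(G/N;\, C^{q}(N;A)).
\]
I would fix a normalized set-theoretic section $s : G/N \to G$ with $s(1) = 1$ and use it to write every $g \in G$ uniquely as $s(\bar{g}) n$ with $\bar{g} \in G/N$ and $n \in N$. The plan is to define a restriction-type map
\[
\Phi : C_{p}^{p+q} \longrightarrow C^{p}(G/N;\, C^{q}(N;A)),\qquad c \longmapsto \bigl((\bar{g}_{1}, \ldots, \bar{g}_{p}) \mapsto c(s(\bar{g}_{1}), \ldots, s(\bar{g}_{p}), \cdot, \ldots, \cdot)\bigr),
\]
and to show that every cochain in $C_{p}^{p+q}$ is equivalent modulo $C_{p+1}^{p+q}$ to one supported on the fixed pattern of $p$ non-$N$ arguments followed by $q$ $N$-arguments, with such cochains detected faithfully by $\Phi$. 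Granting this, unpacking the coboundary $\delta$ on the chosen pattern shows that the terms surviving in the quotient are exactly those producing the $N$-coboundary $\delta_{N}$ in the last $q$ slots, so $d_{0}$ is identified with $\delta_{N}$ and $E_{1}^{p,q} \cong C^{p}(G/N; H^{q}(N;A))$. A parallel but more delicate computation of $d_{1}$ matches it with the $G/N$-coboundary for the $G/N$-action on $H^{q}(N;A)$ induced by conjugation.

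Part (2) is then immediate by definition: $E_{2}^{p,q}$ is the cohomology of $(E_{1}^{\ast,q}, d_{1})$, which has just been identified with the group cochain complex $C^{\ast}(G/N; H^{q}(N;A))$ carrying its standard coboundary.

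The main obstacle will be the bookkeeping in the $E_{0}$-identification: the filtration condition counts $N$-arguments without specifying their positions, so one must argue, using normalization and the section $s$ to shuffle $N$-components to the right, that every class in $C_{p}^{p+q}/C_{p+1}^{p+q}$ admits a canonical representative of the assumed form. One must also verify that the induced $G/N$-action on $H^{q}(N;A)$ is well-defined on cohomology and independent of the choice of $s$, and carefully track which pieces of the full coboundary formula survive on each page. Once these identifications are in place, the rest of the argument is essentially formal.
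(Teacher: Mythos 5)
Your parts (2) and (3) are essentially fine, but part (1) --- which carries the whole proof --- rests on a false identification. For the Hochschild--Serre filtration used here, $C_{p}^{p+q}$ consists of the $(p+q)$-cochains vanishing whenever at least $q+1$ arguments lie in $N$, with no constraint on \emph{which} arguments those are. Consequently a class in $E_{0}^{p,q}=C_{p}^{p+q}/C_{p+1}^{p+q}$ is determined by the values of a representative on \emph{all} tuples containing exactly $q$ entries from $N$, in every possible position pattern, not just the pattern ``$p$ coset representatives followed by $q$ elements of $N$.'' Your map $\Phi$ is therefore not injective on $E_{0}^{p,q}$, and the displayed isomorphism $E_{0}^{p,q}\cong C^{p}(G/N;\,C^{q}(N;A))$ is false. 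A minimal counterexample: take $G=\Z/2\times\Z/2$, $N=\Z/2\times 0$, $A=\Z/2$ with trivial action, and normalized cochains; then $E_{0}^{1,1}$ has dimension $4$ (the values $c(n,g)$ and $c(g,n)$ for the two elements $g\notin N$), while $C^{1}(G/N;C^{1}(N;A))$ has dimension $1$. For the same reason, your claim that every cochain in $C_{p}^{p+q}$ is congruent mod $C_{p+1}^{p+q}$ to one supported on the fixed pattern cannot hold: congruence mod $C_{p+1}^{p+q}$ forces agreement on \emph{all} tuples with exactly $q$ arguments in $N$, not only those in your chosen pattern.

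The identification with $C^{p}(G/N;H^{q}(N;A))$ is genuinely a statement about $E_{1}$, not $E_{0}$, and extracting it is where the real work lies. The device the paper (following Hochschild--Serre) uses is the alternating sum over all $(p,q)$-shuffles, $c_{p}=\sum_{\sigma}\sgn(\sigma)\,c_{\sigma}$, in which the $N$-arguments are conjugated past the $G/N$-arguments before restricting to the standard pattern; it is the single-pattern restriction of $c_{p}$ (rather than of $c$ itself) that becomes bijective after passing to $d_{0}$-cohomology and that intertwines $d_{1}$ with $\delta_{G/N}$. Your handling of convergence in part (3) via boundedness and exhaustiveness of the filtration is correct, and part (2) would indeed be formal once (1) is established, but as written your proof of (1) does not go through.
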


\begin{proof}[Sketch of proof]
Suppose $c \in C^{p+q}(G; A)$. 

Let $\sigma$ be a $(p, q)$-shuffle, that is, $\sigma$ is a permutation of $(1, \ldots, p+q)$ such that $\sigma(1) < \cdots \sigma(p)$ and $\sigma(p+1) < \cdots < \sigma(p+q)$. 
We define $c_{\sigma} \in C^{p+q}(G; A)$ by
\[
c_{\sigma}(\alpha_{1}, \ldots, \alpha_{q}, \beta_{1}, \ldots, \beta_{p}) = c(g_{1}, \ldots, g_{p+q}), 
\]
where
\[
g_{\sigma(i)} = 
\begin{cases}
\beta_{i} & \mbox{if $1 \leq i \leq p$}, \\
\beta_{\sigma(i) + p - i}^{-1} \cdots \beta_{1}^{-1} \alpha_{i-p} \beta_{1} \cdots \beta_{\sigma(i) + p - i}, & \mbox{if $p+1 \leq i \leq p+q$}
\end{cases}\]
and 
\[
c_{p} = \sum_{\sigma} \sgn(\sigma) c_{\sigma}. 
\]

Finally, we define a homomorphism $\varphi : C^{p+q}(G; A) \to C^{p}(G/N; C^{q}(N; A))$ such that 
\[
\varphi(c)(\beta_{1}, \ldots, \beta_{p})(\alpha_{1}, \ldots, \alpha_{q}) = c_{p}(\alpha_{1}, \ldots. \alpha_{q}, \beta_{1}, \ldots, \beta_{p})
\]
for $\beta_{1}, \ldots, \beta_{p} \in G/N$ and $\alpha_{1}, \ldots, \alpha_{q} \in N$. 
This homomorphism $\varphi$ induces isomorphisms $E_{1}^{p, q} \to C^{p}(G/N; H^{q}(N; A))$ and $E_{2}^{p, q} \to H^{p}(G/N; H^{q}(N; A))$. 
\end{proof}

Denote by $A^{N}$ the subgroup of $A$ consisting of all of $G$-invariant elements. 
It is easy to check that $H^{0}(N; A)$ is isomorphic to $A^{N}$. 
For a group $p$-cocycle $c \in C^{p}(G/N; A^{N})$, we define a new cocycle $\overline{c} \in C^{p}(G/N; A)$ by composing with natural projections $G \to G/N$ and a natural inclusion $A^{N} \hookrightarrow A$; 
\[
\overline{c} : G^{p} \to (G/N)^{p} \xrightarrow{c} A^{N} \hookrightarrow A. 
\]
This correspondence define a homomorphism $\Rm{inf} : H^{p}(G/N; A^{N}) \to H^{p}(G; A)$, which is called the {\it inflation homomorphism}. 

On the other hand, an inclusion map $N \hookrightarrow G$ induces a homomorphism $\Rm{res} : H^{q}(G; A) \to H^{q}(N; A)$. 
It is easy to check that the homomorphism $\Rm{res}$ factors through the $G$-invariant part of $H^{q}(N, A)$. 
This homomorphism $\Rm{res} : H^{q}(G; A) \to H^{0}(G/N; H^{q}(N; A))$ is called the {\it restriction homomorphism}. 

\begin{prp}[Hochschild-Serre \cite{Hochschild-Serre53}]
Let $1 \to N \to G \to G/N \to 1$ be a group extension and $A$ be a $G$-module. 
Then there exists a group exact sequence
\begin{align*}
0 \to H^{1}(G/A; A^{N}) \xrightarrow{\Rm{inf}} H^{1}(G; A) \xrightarrow{\Rm{res}} & H^{0}(G/N; H^{1}(N; A))\\
& \xrightarrow{d_{2}} H^{2}(G/N; A^{N}) \xrightarrow{\Rm{inf}} H^{2}(G; A). 
\end{align*}
\qed
\end{prp}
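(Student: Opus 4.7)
The plan is to derive the five-term exact sequence directly from the LHS spectral sequence of the previous proposition, by reading off what happens in total degrees $\leq 2$ and identifying the edge maps with $\Rm{inf}$ and $\Rm{res}$.

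First I would collect the relevant $E_{2}$-entries. Since $H^{0}(N;A) = A^{N}$, we have $E_{2}^{p,0} = H^{p}(G/N; A^{N})$ for all $p$, and $E_{2}^{0,1} = H^{0}(G/N; H^{1}(N;A))$. I would then enumerate the possible differentials emanating from or landing in the diagonals $p+q=1$ and $p+q=2$. Along $p+q=1$, there is no nontrivial differential touching $E_{r}^{1,0}$ or $E_{r}^{0,1}$ for $r\geq 2$ (the source or target always lies in a quadrant with $p<0$ or $q<0$), so $E_{\infty}^{1,0} = E_{2}^{1,0}$ and $E_{\infty}^{0,1} = \Rm{Ker}(d_{2}: E_{2}^{0,1} \to E_{2}^{2,0})$. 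Along $p+q=2$ the only nonzero differential affecting $E^{2,0}$ is exactly $d_{2}: E_{2}^{0,1} \to E_{2}^{2,0}$, so $E_{\infty}^{2,0} = E_{2}^{2,0}/\Rm{Im}(d_{2})$.

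Next I would translate the convergence statement into the filtration on $H^{n}(G;A)$. For $n=1$, the convergence gives a short exact sequence
\[
0 \to E_{\infty}^{1,0} \to H^{1}(G;A) \to E_{\infty}^{0,1} \to 0,
\]
and for $n=2$ the filtration has $E_{\infty}^{2,0}$ as the lowest piece, giving an inclusion $E_{\infty}^{2,0} \hookrightarrow H^{2}(G;A)$. Splicing these at $E_{\infty}^{0,1}$ via the kernel description above, and at $E_{\infty}^{2,0}$ via the cokernel description, yields the six-term sequence
\[
0 \to E_{2}^{1,0} \to H^{1}(G;A) \to E_{2}^{0,1} \xrightarrow{d_{2}} E_{2}^{2,0} \to H^{2}(G;A),
\]
which is exactly the claimed five-term sequence once the $E_{2}$ identifications are substituted.

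The main obstacle is identifying the edge homomorphisms $E_{2}^{p,0} \to H^{p}(G;A)$ and $H^{p}(G;A) \to E_{2}^{0,p}$ with the explicit maps $\Rm{inf}$ and $\Rm{res}$ defined in the text. For this I would invoke the explicit cochain model $\varphi : C^{p+q}(G;A) \to C^{p}(G/N; C^{q}(N;A))$ sketched in the proof of the previous proposition. The map $E_{2}^{p,0} \to H^{p}(G;A)$ comes from including a cocycle $c \in C^{p}(G/N; A^{N})$ into $C^{p}(G;A)$ via the projection $G \to G/N$ and the inclusion $A^{N}\hookrightarrow A$, which is precisely the formula defining $\Rm{inf}$. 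The map $H^{p}(G;A) \to E_{2}^{0,p}$ is induced by $\varphi$ evaluated at the empty tuple on $G/N$, which restricts a cocycle on $G$ to its behaviour on $N^{p}$, and whose image lands in the $G/N$-invariants since $\varphi$ is $G/N$-equivariant; this matches the definition of $\Rm{res}$. Once these identifications are made, exactness is automatic from the spectral-sequence derivation above, and there is nothing more to check.
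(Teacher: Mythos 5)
Your derivation is correct, and it is exactly the route the paper itself gestures at: the proposition is stated with only a citation to Hochschild--Serre and the remark that the five-term sequence is the generic low-degree exact sequence of a converging spectral sequence, so there is no proof in the text to compare against. Your bookkeeping is right: in the first quadrant the only differential touching total degrees $1$ and $2$ in the relevant spots is $d_{2}\colon E_{2}^{0,1}\to E_{2}^{2,0}$, giving $E_{\infty}^{1,0}=E_{2}^{1,0}$, $E_{\infty}^{0,1}=\ker d_{2}$, $E_{\infty}^{2,0}=\mathrm{coker}\, d_{2}$, and splicing the filtration sequence for $H^{1}(G;A)$ with the inclusion $E_{\infty}^{2,0}\hookrightarrow H^{2}(G;A)$ yields the claimed sequence. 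The only step where real work is hidden is the identification of the edge maps with the explicit $\Rm{inf}$ and $\Rm{res}$ of the text; you flag this correctly, and the appeal to the map $\varphi$ from the sketch of the preceding proposition does the job (for the restriction edge map the only $(0,q)$-shuffle is the identity, so $\varphi(c)$ is literally $c|_{N^{q}}$, landing in the $G/N$-invariants as the paper already notes). One cosmetic point: what you call a ``six-term sequence'' is the five-term sequence of the statement --- the count is of cohomology groups, not of the leading $0$.
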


This exact sequence is called the {\it five-term exact sequence} or the {\it inflation-restriction exact sequence} for a group extension $1 \to N \to G \to G/N \to 1$. 
In general, given the spectral sequence converging to some cohomology group, it is well known that there is an exact sequence called the five-term exact sequence. 

Moreover, the five-term exact sequence can be extended to the {\it seven-term exact sequence}. 

\begin{prp}[Dekimpe-Hartl-Wauters \cite{DHW2012}]
Let $1 \to N \to G \to G/N \to 1$ be a group extension and $A$ be a $G$-module. 
Then there exists a group exact sequence
\[\xymatrix{
0 \ar[r] & H^{1}(G/N; A^{N}) \ar[r]^-{\Rm{inf}} & H^{1}(G; A) \ar[r]^-{\Rm{res}} & H^{1}(N; A)^{G}\\
 \ar[r]^-{d_{2}} & H^{2}(G/N; A^{N}) \ar[r]^-{\Rm{inf}} & \Rm{Ker}(H^{2}(G; A) \ar[r]^-{\Rm{res}} & H^{2}(N; A)^{G})\\
 \ar[r]^-{\rho} & H^{1}(G/N; H^{1}(N; A)) \ar[r]^-{d_{2}} & H^{3}(G/N; A^{N}) & 
}\]
\qed
\end{prp}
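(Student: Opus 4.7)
The plan is to extract all seven terms from the LHS spectral sequence $E_r^{p,q}$ converging to $H^{p+q}(G; A)$, refining the standard five-term argument by examining the next level of the convergence filtration on $H^2(G; A)$. The first five terms are the usual inflation–restriction sequence, so the substantive new content is the construction of the connecting map $\rho$ and the establishment of exactness at positions five, six, and seven.

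First, I would introduce the convergence filtration $H^2(G; A) = F^0 \supset F^1 \supset F^2 \supset 0$ coming from $(C_p^{\ast})$, and identify the successive quotients $F^p/F^{p+1} \cong E_{\infty}^{p, 2-p}$ using the convergence statement. Under the edge map $H^2(G; A) \twoheadrightarrow E_{\infty}^{0,2} \hookrightarrow E_2^{0,2} = H^0(G/N; H^2(N; A)) = H^2(N; A)^G$, one checks that the quotient map is precisely the restriction, and hence $\Rm{Ker}(\Rm{res}) = F^1 H^2(G; A)$.

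Next, I would define $\rho$ as the composition of the projection $F^1 H^2 \twoheadrightarrow F^1/F^2 = E_{\infty}^{1,1}$ followed by the inclusion $E_{\infty}^{1,1} \hookrightarrow E_2^{1,1} = H^1(G/N; H^1(N; A))$. A count of differentials emanating from position $(1,1)$ shows $d_r$ vanishes for $r \geq 3$ (the target $E_r^{1+r, 2-r}$ is zero), so $E_{\infty}^{1,1} = E_3^{1,1} = \Rm{Ker}(d_2 : H^1(G/N; H^1(N; A)) \to H^3(G/N; A^N))$. This gives $\Rm{Im}(\rho) = \Rm{Ker}(d_2)$, settling exactness at position six. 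For position five, the kernel of the projection $F^1 H^2 \to E_{\infty}^{1,1}$ is $F^2 H^2 = E_{\infty}^{2,0}$; since position $(2, 0)$ receives only $d_2^{0,1}$ and has no nonzero outgoing differentials, one has $E_{\infty}^{2,0} = E_2^{2,0}/\Rm{Im}(d_2^{0,1}) = H^2(G/N; A^N)/\Rm{Im}(d_2 : H^1(N; A)^G \to H^2(G/N; A^N))$. Because $\Rm{inf}$ is the natural edge map onto $F^2 H^2 = E_{\infty}^{2,0}$, its image coincides with $\Rm{Ker}(\rho)$.

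The main obstacle I anticipate is making $\rho$ explicit at the cochain level: the spectral sequence produces it abstractly as a subquotient map, but its compatibility with $\Rm{inf}$ and with $d_2 : H^1(G/N; H^1(N; A)) \to H^3(G/N; A^N)$ requires spelling out the transgression through the shuffle map $\varphi$ used in the sketch of the LHS proposition. Concretely, for a 2-cocycle $c$ representing a class in $F^1 H^2$, one would choose a 1-cochain $b$ on $N$ with $\delta b = c|_{N \times N}$, extend $b$ to a cochain on $G$, and verify that $c - \delta b$ descends to a well-defined class in $H^1(G/N; H^1(N; A))$ independent of the choice of $b$ and of the cocycle representative. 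Once this bookkeeping is handled, all remaining exactness claims reduce to the standard five-term argument applied at the appropriate spectral-sequence page, and the final $d_2$ is simply the second-page differential with no additional work needed.
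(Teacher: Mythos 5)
The paper gives no proof of this proposition --- it is imported verbatim from Dekimpe--Hartl--Wauters with a \(\qed\), and the only thing the paper adds is the explicit formula for \(\rho(C,\Lambda)\) displayed just below the statement. Your spectral-sequence derivation is therefore a genuinely different route from the one the cited reference takes (they give an elementary cocycle-level proof precisely in order to avoid the spectral sequence), and on the level of abstract groups it is correct: the identifications \(F^{1}H^{2}(G;A)=\mathrm{Ker}(\mathrm{res})\) via the edge map to \(E_{\infty}^{0,2}\subset H^{2}(N;A)^{G}\), \(E_{\infty}^{1,1}=E_{3}^{1,1}=\mathrm{Ker}(d_{2}^{1,1})\) because every later differential at \((1,1)\) leaves the first quadrant, and \(E_{\infty}^{2,0}=E_{2}^{2,0}/\mathrm{Im}(d_{2}^{0,1})\) are exactly what is needed to splice the sixth and seventh terms onto the five-term sequence, and your exactness checks at positions four through six are right. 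What you should not underestimate is the step you defer as ``bookkeeping'': identifying the abstract map \(F^{1}H^{2}\twoheadrightarrow E_{\infty}^{1,1}\hookrightarrow H^{1}(G/N;H^{1}(N;A))\) with the explicit formula \(\rho(C,\Lambda)([g])(n)=\Lambda(g^{-1}ng)-\Lambda(n)+C(g^{-1}n,g)-C(g^{-1},n)\) is the actual content of the cited paper, and it is the part this article genuinely uses --- part (2) of the main theorem proves \(\rho([C])=0\) by substituting equation (4.1) into that formula, so an existence-only version of \(\rho\) would not support the application. Your cochain-level recipe (choose \(\Lambda\) with \(\delta\Lambda=-C|_{N}\), form the expression above, check independence of all choices and that the result is a \(1\)-cocycle on \(G/N\) valued in \(H^{1}(N;A)\)) is the right one, but pushing it through the shuffle map \(\varphi\) with consistent signs and normalizations is a real computation rather than an afterthought. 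In short: the spectral-sequence route buys a clean proof of exactness, while the elementary route of Dekimpe--Hartl--Wauters buys the explicit \(\rho\) on which the rest of the paper depends; a complete write-up along your lines must supply the bridge between the two.
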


Suppose that a group 2-cocycle $C : G \times G \to A$ and a group 1-cocycle $\Lambda : N \to A$ satisfy $C|_{N} = -\delta\Lambda$. 
Then we define a map $\rho(C, \Lambda) : G/N \to C^{1}(N; A)$ as
\[
\rho(C, \Lambda)([g])(n) = \Lambda(g^{-1} n g) - \Lambda(n) + C(g^{-1}n, g) - C(g^{-1}, n). 
\]
This map defines the fifth homomorphism $\rho : \Rm{Ker}(\Rm{res}) \to H^{1}(G/N; H^{1}(N; A))$ (see Dekimpe-Hartl-Waures \cite{DHW2012}).


\section{Construction of the Kac-Moody central extension due to Mickelsson}

In this section, we review the construction of the universal central extension on a free loop group due to Mickelsson \cite{Mickelsson1987}. 

Put $G = \Rm{SU}(2)$. 
Denote by $LG$ a free loop group $C^{\infty}(S^{1}, G)$, and set $G^{D} = C^{\infty}(D, G)$ and $G^{D}_{S^{1}} = \{g \in G^{D} \mid g|_{S^{1} = \partial D} \equiv 1 \in G\}$ with pointwise products.  
Then we have a group central extension $1 \to G^{D}_{S^{1}} \to G^{D} \to LG \to 1$. 
Note that the group $G^{D}_{S^{1}}$ is isomorphic to a group $G^{S^{2}}_{\ast} = C^{\infty}_{\ast}(S^{2}; G)$ which consists of all base point-preserving smooth maps from the 2-sphere $S^{2}$ to $G$. 

Mickelsson \cite{Mickelsson1987} constructed a central $U(1)$-extension on $LG$ which is the {\it universal central extension} on $LG$. 
We shall start a group 2-cocycle on $G^{D}$. 

\begin{dfn}[the Mickelsson's 2-cocycle \cite{Mickelsson1987}]
A group 2-cochain $C : G^{D} \times G^{D} \to \R$ on $G^{D}$ is defined as
\[
C(g, h) := \frac{1}{8\pi^{2}}\int_{D}\Rm{tr}(g^{-1}dg \wedge dh \, h^{-1})
\]
with coefficients in $\R$. 
\end{dfn}

Recall that an additive group of the Lie algebra $\frk{g}$-valued 1-forms $\Omega^{1}(D; \frk{g})$ on the 2-disk $D$ is a right $G^{D}$-module by right adjoint action. 
We define a $G^{D}$-invariant bilinear form $\omega : \Omega^{1}(D; \frk{g}) \otimes \Omega^{1}(D; \frk{g}) \to \R$ as 
\[
\omega(\eta, \tau) = \frac{1}{8\pi^{2}}\int_{D}\Rm{tr}(\eta \wedge \tau), 
\]
and a group 1-cocycle $\bar{\theta} : G^{D} \to \Omega^{1}(D; \frk{g})$ with coefficients in $\Omega^{1}(D; \frk{g})$ as follow: 
\[
\bar{\theta}(g) = g^{\ast}\theta = g^{-1}dg \in \Omega^{1}(D; \frk{g}), 
\]
where $\theta \in \Omega^{1}(G; \frk{g})$ is the Maurer-Cartan 1-form on $G$. 
Then we obtain 
\[
\omega(\theta \cup \theta)(g, h) = \frac{1}{8\pi^{2}}\int_{D}\Rm{tr}(h^{-1}\bar{\theta}(g)h \wedge \bar{\theta}(h)) = C(g, h), 
\]
and it follows that the cochain $C$ is a 2-cocycle by proposition \ref{prp:1.8}. 

For a smooth map $\bar{g} : B \to G$ from the 3-ball $B = \{ x \in \R^{3} \mid \|x\| \leq 1\}$ to the group $G$, put 
\[
\Lambda(\bar{g}) = -\frac{1}{24\pi^{2}}\int_{B}\Rm{tr}(\bar{g}^{-1}d\bar{g})^{\wedge 3}. 
\]
Recall that a cohomology class of $\frac{1}{24\pi^{2}}\Rm{tr}(\theta^{\wedge 3}) \in \Omega^{3}(G)$ is a generator of $H^{3}(G; \Z)$. 
It follows that, if two smooth maps $\bar{g}, \bar{g}' : B \to G$ take the same value on the boundary $\partial B = S^{2}$, then $\Lambda(\bar{g}) - \Lambda(\bar{g}') \in \Z$. 
Hence $\Lambda(\bar{g}) \bmod \Z$ depends only on $g = \bar{g}|_{\partial B = S^{2}} : S^{2} \to G$, so that, we obtain a group 1-cochain $\Lambda : G_{S^{1}}^{D} \cong C^{\infty}_{\ast}(S^{2}; G) \to \R/\Z$

\begin{prp}
\label{prp:3.2}
For $g, h \in G^{D}_{S^{1}} < G^{D}$, the following equation holds: 
\[
-\delta\Lambda(g, h) = C(g, h) \bmod \Z. 
\]
\end{prp}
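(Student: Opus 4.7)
The plan is to reduce the statement to the Polyakov--Wiegmann identity for the Maurer--Cartan form, applied on the 3-ball $B$ whose boundary $\partial B = S^{2}$ is identified with the quotient $D/\partial D$.

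First I would unwind the definitions. Since $\R/\Z$ carries the trivial $G^{D}_{S^{1}}$-action, for $g,h \in G^{D}_{S^{1}}$ we have
\[
-\delta\Lambda(g,h) \;=\; \Lambda(gh) - \Lambda(g) - \Lambda(h) \pmod{\Z}.
\]
Choose smooth extensions $\bar g,\bar h : B \to G$ so that $\bar g|_{\partial B} = g$ and $\bar h|_{\partial B} = h$ under the identification $\partial B = S^{2} \cong D/\partial D$ (this identification is legitimate because $g$ and $h$ are trivial on $\partial D$); then $\bar g \bar h$ is an extension of $gh$. Substituting these extensions into the definition of $\Lambda$ turns the claim into an identity about $\int_{B}\mathrm{tr}(\,\cdot\,)^{\wedge 3}$ evaluated on $\bar g$, $\bar h$, and $\bar g\bar h$.

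Next I would invoke the Polyakov--Wiegmann identity for the Maurer--Cartan form. Writing $\omega_{u}=u^{-1}du$ and using $\omega_{uv}=v^{-1}\omega_{u}v+\omega_{v}$, a direct expansion together with the Maurer--Cartan equation $d\omega_{u}=-\omega_{u}\wedge\omega_{u}$ and cyclicity of the trace yields
\[
\mathrm{tr}\bigl((\bar g\bar h)^{-1}d(\bar g\bar h)\bigr)^{\wedge 3} \;=\; \mathrm{tr}(\bar g^{-1}d\bar g)^{\wedge 3} + \mathrm{tr}(\bar h^{-1}d\bar h)^{\wedge 3} - 3\,d\,\mathrm{tr}\!\bigl(\bar g^{-1}d\bar g \wedge d\bar h\,\bar h^{-1}\bigr).
\]
Integrating over $B$, dividing by $-24\pi^{2}$, and collecting terms gives
\[
\Lambda(gh) - \Lambda(g) - \Lambda(h) \;=\; \frac{1}{8\pi^{2}}\int_{B} d\,\mathrm{tr}\!\bigl(\bar g^{-1}d\bar g \wedge d\bar h\,\bar h^{-1}\bigr).
\]
Stokes' theorem then converts this into an integral over $\partial B = S^{2}$, and because the integrand depends only on the boundary values $g$ and $h$, it equals $C(g,h)$ after re-identifying $S^{2}$ with $D/\partial D$.

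Finally I would address the ambiguity in the choice of extensions $\bar g, \bar h$. As the author already noted when defining $\Lambda$, the integrality of the generator $\tfrac{1}{24\pi^{2}}\mathrm{tr}(\theta^{\wedge 3}) \in H^{3}(G;\Z)$ makes $\Lambda$ well-defined only modulo $\Z$, so the equality above holds $\bmod\,\Z$. The main obstacle I anticipate is the careful verification of the Polyakov--Wiegmann identity itself: expanding $(v^{-1}\omega_{u}v+\omega_{v})^{\wedge 3}$ produces several non-commuting matrix-valued terms that must be rearranged using graded-commutativity of forms and cyclicity of the trace, and one must pay close attention to the numerical coefficient $3$ in front of the exact term, since it is precisely this factor that balances the ratio $\tfrac{1}{24\pi^{2}} : \tfrac{1}{8\pi^{2}}$ between $\Lambda$ and $C$.
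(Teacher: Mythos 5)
Your proposal is correct and follows essentially the same route as the paper: the identity you cite as Polyakov--Wiegmann is exactly the $3$-form identity the paper derives from $\omega_{uv}=v^{-1}\omega_{u}v+\omega_{v}$, cyclicity of the trace, and the Maurer--Cartan equation, after which both arguments integrate over $B$ and apply Stokes' theorem. You merely make explicit the steps (choice of extensions to $B$, the boundary integral, the mod-$\Z$ ambiguity) that the paper leaves implicit after establishing the pointwise identity.
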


\begin{proof}
By the invariance of trace under the adjoint action and the cyclic permutations, we have 
\begin{align*}
& \Rm{tr}(g^{-1}dg)^{\wedge 3} - \Rm{tr}((gh)^{-1}d(gh))^{\wedge 3} + \Rm{tr}(h^{-1}dh)^{\wedge 3} \\
& = \Rm{tr}(g^{-1}dg)^{\wedge 3} - \Rm{tr}(h^{-1}(g^{-1}dg)h + h^{-1}dh)^{\wedge 3} + \Rm{tr}(h^{-1}dh)^{\wedge 3}\\
& = -3\Rm{tr}((g^{-1}dg)^{\wedge 2} \wedge (dh h^{-1})) +3\Rm{tr}((g^{-1}dg) \wedge (dh h^{-1})^{\wedge 2}). 
\end{align*}
Recall that Maurer-Cartan 1-form $\theta$ satisfies that $d\theta + \theta \wedge \theta = 0$. 
Therefore, we obtain 
\begin{align*}
& \Rm{tr}(g^{-1}dg)^{\wedge 3} - \Rm{tr}((gh)^{-1}d(gh))^{\wedge 3} + \Rm{tr}(h^{-1}dh)^{\wedge 3} \\
& = 3\Rm{tr}(d(g^{-1}dg) \wedge (dh h^{-1})) -3\Rm{tr}((g^{-1}dg) \wedge d(dh h^{-1}))\\
& = 3 d\Rm{tr}(g^{-1}dg \wedge dh h^{-1}). 
\end{align*}
This implies that the proposition is proven. 
\end{proof}

Furthermore, by directly calculation, we can obtain that, for $g \in G^{D}$ and $h \in G^{D}_{S^{1}}$,  
\begin{equation}
\label{eq:1}
C(g^{-1}h, g) - C(g^{-1}, h) + \Lambda(g^{-1} h g) - \Lambda(h) \equiv 0 \bmod \Z. 
\end{equation}

We will construct a principal $U(1)$-bundle $P$ on $LG$, and show that the space $P$ have a group structure (see Mickelsson \cite{Mickelsson1987}). 
We regard the unitary group $U(1)$ as the additive group $\R/\Z$.  
Consider the product space $G^{D} \times U(1)$ with the equivalence relation 
\[
(g, \lambda) \sim (g h, \lambda + C(g, h) + \Lambda(h))
\]
for $g \in G^{D}$, $h \in G^{D}_{S^{1}}$ and $\lambda \in U(1)$. 
Proposition \ref{prp:3.2} implies that the equivalence relation is well-defined. 
Then we define $P = (G^{D} \times U(1))/\sim$. 
The product of two equivalence classes on $P = (G^{D} \times U(1))/\sim$ is given by 
\[
[g, \lambda][h, \mu] = [g h, \lambda + \mu + C(g, h)]
\] 
for $g, h \in G^{D}$ and $\lambda, \mu \in U(1)$. 
Equation \ref{eq:1} implies that this product is well-defined on $P$. 

As above, we obtain a central $U(1)$-extension on the loop group $LG$: 
\[
0 \to U(1) \to P \to LG \to 1. 
\]

\begin{rmk}
This infinite-dimensional Lie group $P$ is known as the {\it Kac-Moody group} which corresponds the {\it Kac-Moody algebra} based on $\frk{g}$ (see Khesin-Wendt \cite{KW2009}). 
Thus this central extension is called the {\it Kac-Moody central extension} on $LG$. 
\end{rmk}

\begin{rmk}
The principal $U(1)$-bundle $P$ on $LG$ has the non-trivial topology, so that, the group 2-cocycle corresponding the Kac-Moody central extension on $LG$ is not smooth. 
\end{rmk}


\section{The Kac-Moody central extension in the seven-term exact sequence}

We consider the situation of the last section. 
We showed the Mickelsson's 2-cocycle on the group $G^{D}$ and the Kac-Moody central extension $P$ on the loop group $LG$ in the last section. 
We can rephrase them in terms of the seven-term exact sequence for the group short exact sequence $1 \to G^{D}_{S^{1}} \to G^{D} \to LG \to 1$. 

\begin{thm}
On the seven-term exact sequence for the group short exact sequence $1 \to G^{D}_{S^{1}} \to G^{D} \to LG \to 1$, i.e., 
\begin{align*}
0 \to H^{1}(LG; U(1)) \to H^{1}(G^{D}; U(1)) \to H^{1}(G^{D}_{S^{1}}; U(1))^{G^{D}}\\
 \to H^{2}(LG; U(1)) \to \Rm{Ker}(H^{2}(G^{D}; U(1)) \to H^{2}(G^{D}_{S^{1}}; U(1)))\\
 \to H^{1}(LG; H^{1}(G^{D}_{S^{1}};U(1))) \to H^{3}(LG; U(1)), 
\end{align*} 
the follows hold. 
\begin{enumerate}
\item 
The cohomology class of the Mickelsson's cocycle $C$ in $H^{2}(G^{D}; U(1))$ with a group 1-cochain $\Lambda : G^{D}_{S^{1}} \to G$ is in the fifth term. 

\item 
The image of the class $[C] \in H^{2}(G^{D}; U(1))$ to the sixth term $H^{1}(LG; H^{1}(G^{D}_{S^{1}}; U(1)))$ vanishes. 

\item 
The cohomology class of the Kac-Moody cantral extension on $LG$ in the fourth term $H^{2}(LG; U(1))$ is sent to the class $[C] \in H^{2}(G^{D}; U(1))$ in the fifth term. 
\end{enumerate}
\end{thm}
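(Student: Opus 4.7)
The plan is to read off each of the three assertions from the explicit cocycle formulas established in Section 4, addressing them in the order (1), (2), (3).

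For assertion (1), the fifth term is $\Rm{Ker}(\Rm{res} : H^{2}(G^{D}; U(1)) \to H^{2}(G^{D}_{S^{1}}; U(1)))$, so it suffices to exhibit a 1-cochain whose coboundary equals $C|_{G^{D}_{S^{1}} \times G^{D}_{S^{1}}}$ modulo $\Z$. This is precisely the content of Proposition 3.2, which gives $C|_{G^{D}_{S^{1}}} = -\delta \Lambda$, so $[C]$ lies in the kernel. For assertion (2), the connecting map $\rho$ was defined in Section 3 by $\rho(C, \Lambda)([g])(n) = \Lambda(g^{-1} n g) - \Lambda(n) + C(g^{-1} n, g) - C(g^{-1}, n)$, and the displayed identity $C(g^{-1} h, g) - C(g^{-1}, h) + \Lambda(g^{-1} h g) - \Lambda(h) \equiv 0 \bmod \Z$ from Section 4 says exactly that this cochain vanishes modulo $\Z$ for every $g \in G^{D}$ and $n = h \in G^{D}_{S^{1}}$; hence $\rho([C]) = 0$.

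For assertion (3), I will compute $\Rm{inf}([P])$ by pulling the central extension $P \to LG$ back along the projection $\pi : G^{D} \to LG$ and identifying the result with the central extension of $G^{D}$ by $U(1)$ whose cocycle is $C$. Concretely, $\pi^{*}P = \{(g, p) \in G^{D} \times P \mid \pi(g) = \pi_{P}(p)\}$; since the defining equivalence $(g, \lambda) \sim (g h, \lambda + C(g, h) + \Lambda(h))$ applied with fixed first coordinate on both sides forces $h = 1$ and $\lambda = \lambda'$, each fiber of $\pi^{*}P \to G^{D}$ is parametrized by $\lambda \mapsto (g, [g, \lambda])$, giving a bijection $G^{D} \times U(1) \cong \pi^{*}P$. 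Transporting the group law of $P$ through this bijection produces $(g_{1}, \lambda_{1})(g_{2}, \lambda_{2}) = (g_{1} g_{2}, \lambda_{1} + \lambda_{2} + C(g_{1}, g_{2}))$, so the section $g \mapsto (g, 0)$ reads off $C$ as the cocycle of $\pi^{*}P$. Since the extension class of a pullback equals the inflation of the original class, this yields $\Rm{inf}([P]) = [C]$.

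The main step requiring genuine verification is the parametrization of $\pi^{*}P$ by $G^{D} \times U(1)$; everything else reduces to unpacking definitions and invoking identities already in Section 4. I do not foresee further obstacles, since the key global identity that makes $P$ a well-defined group has already been used in the construction, and what remains for the theorem is only to recognize how the three classes $[C]$, $\rho([C])$, and $\Rm{inf}([P])$ sit inside the seven-term exact sequence.
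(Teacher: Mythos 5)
Your proposal is correct and follows the same route as the paper: part (1) via Proposition 3.2, part (2) via the identity $C(g^{-1}h, g) - C(g^{-1}, h) + \Lambda(g^{-1}hg) - \Lambda(h) \equiv 0 \bmod \Z$ together with the explicit formula for $\rho$, and part (3) by identifying the pullback $\pi^{*}P$ with the central extension of $G^{D}$ defined by $C$. The only difference is that you spell out the parametrization of $\pi^{*}P$ by $G^{D} \times U(1)$, which the paper asserts ``by definition''; your added detail is accurate.
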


\begin{proof}
\begin{enumerate}
\item 
Proposition \ref{prp:3.2} implies that the class of the Mickelsson's 2-cocycle $C$ in $H^{2}(G^{D}; U(1))$ vanishes under the restriction homomorphism. 

\item 
Equation \ref{eq:1} implies that the class $[C]$ in the fifth term vanishes under the homomorphism $\rho$. 

\item 
By definition, the pullback of the Kac-Moody central extension $P$ by the restriction map $G^{D} \to LG$ is a central extension on $G^{D}$ corresponding the Mickelsson's 2-cocycle $C$. 
Hence the cohomology class corresponding the Kac-Moody central extension on $LG$ is sent to the class $[C]$ in the fifth term. 
\end{enumerate}
\end{proof}

\bibliography{reference}

\end{document}